\newcommand{\T}{{\sf T}}
\newcommand{\Tg}{{\sf T_{\bullet}}}
\newtheorem{theorem}{\sc Theorem}[section]
\newtheorem{thm}[theorem]{\sc Theorem}
\newtheorem{lem}[theorem]{\sc Lemma}
\newtheorem{prop}[theorem]{\sc Proposition}
\newtheorem{cor}[theorem]{\sc Corollary}
\newtheorem{conjecture}[theorem]{\sc Conjecture}
\newtheorem*{con1'}{Conjecture 1'}
\theoremstyle{definition}
\newtheorem{exe}[theorem]{\sc Example}
\newtheorem{rem}[theorem]{\sc Remark}
\newcommand{\go}[1]{o_{\bullet}(#1)}
\title[Generalized torsion elements]{Generalized torsion elements in groups}
\subjclass[2010] {20E45, 20F12, 20F14, 20C15}
\keywords{generalized torsion, torsion,  commutators, finite groups, character theory}
\author[Bastos]{Raimundo Bastos}
\address[R. Bastos]{Departamento de Matem\'atica, Universidade de Bras\'ilia, Campus Universit\'{a}rio Darcy Ribeiro, 70910-900 Brasilia, DF, Brazil \\ bastos@mat.unb.br}
\author[Schneider]{Csaba Schneider}
\address[C. Schneider]{Departamento de Matem\'atica\\
Universidade Federal de Minas Gerais\\
%Av.\ Ant\^onio Carlos 6627\\
31270-901 Belo Horizonte, MG, Brazil\\
csaba@mat.ufmg.br \\
 schcs.github.io/WP/}
\author[Silveira]{Danilo Silveira}
\address[D. Silveira]{Departamento de Ci\^encias Exatas e Aplicadas, Universidade Federal de Ouro Preto, 35931-008 
	Ouro Preto, MG,  Brazil \\ danilo.sancao@ufop.edu.br}
\begin{document}
\maketitle
\begin{abstract}
A group element is called a generalized torsion  if a finite product of its conjugates is equal to the identity. 
We prove that in a nilpotent or FC-group, the generalized torsion elements are all  torsion elements.
Moreover, we compute the generalized order of an element in a finite group $G$ using its character table. 
\end{abstract}
\subjclass{}

%%% ----------------------------------------------------------------------
\maketitle
%%% ----------------------------------------------------------------------
%\tableofcontents

\section{Introduction}\label{sec:intro}

For a pair of elements $x$ and $y$ in a  group $G$, we write $x^y=y^{-1}xy$ for the conjugate of $x$ by $y$. The order of $x$, denoted by $o(x)$, is the least positive integer $k$ such that $x^k=1$; the order is infinite if no such $k$ exists. We say that $x$ is a torsion element if $o(x)$ is finite. The set of all torsion elements of $G$ will be denoted by $\T(G)$.  An element $x\in G$ is said to be a {\it generalized torsion} if there exist $g_1, \ldots, g_k\in G$ such that $$x^{g_1} x^{g_2} \ldots x^{g_k} =1.$$ 
We will denote by $\Tg(G)$ the set of all generalized torsion elements in $G$. The {\it generalized order} of $x \in \Tg(G)$, denoted by $o_{\bullet}(x)$, is defined to be the smallest  positive integer $n$ such $x^{g_1}\cdots x^{g_n}=1$ for some $g_1,\ldots,g_n\in G$.  Hence, the identity element, for example, has generalized order one. 
We say that $G$ has {\it generalized exponent} $k$, writing $\exp_{\bullet}(G)=k$, if $\Tg(G) = G$ and $k$ is the minimal positive integer such that all elements in $G$ have generalized order at most $k$.

Note that if $x$ is a torsion element of $G$, then $o_{\bullet}(x)\leq o(x)$. Thus  $\T(G) \subseteq \Tg (G)$. The reverse inclusion, however, 
does not hold. For example, in the infinite dihedral group $D_{\infty}$, we have 
\[\
T(D_{\infty}) = \{g \mid g^2=1\}\quad\mbox{while}\quad \Tg(D_{\infty}) = D_{\infty}.
\]
Moreover, there are  finitely generated torsion-free groups  where all elements are generalized torsions (see \cite[Problem 3.11]{KM},  Gorchakov \cite{Gorc} or Goryushkin \cite{Gor}).  Osin \cite[Corollary 1.2]{Osin} constructed an example of a torsion-free 2-generator group $G$ with exactly two  conjugacy classes (in particular, $\exp_{\bullet}(G)=2$).
More recently, generalized torsion elements in knot groups were studied in a number of papers (see \cite{IMM},  \cite{IMM23}, \cite{MT},  \cite{NR}
and references therein for further results).

In Section~\ref{sec:gen}, we focus on groups whose generalized torsions are torsions. We will prove that this holds in the class of FC-groups 
(that is, groups whose conjugacy classes are finite) as stated in 
the following theorem.

\begin{thm} \label{thm_FC}
If $G$ is an FC-group, then $\Tg(G) = \T (G)$.     
\end{thm}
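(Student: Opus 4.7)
The inclusion $\T(G) \subseteq \Tg(G)$ is already observed in the introduction, so the task is to show $\Tg(G) \subseteq \T(G)$. My plan is to reduce the problem to an abelian torsion-free quotient, where a product of conjugates of $x$ collapses to a pure power of $x$. For this, I would invoke two classical structural results about FC-groups, both essentially due to B.H.~Neumann: (i) the set $\T(G)$ is a (characteristic) subgroup, which follows from Dietzmann's lemma together with the fact that the conjugacy class $x^G$ of any torsion element $x$ is finite, forcing the normal closure $\langle x\rangle^G$ to be finite; and (ii) the derived subgroup $G'$ of any FC-group is periodic, so $G' \subseteq \T(G)$. Combining (i) and (ii), the quotient $\bar G := G / \T(G)$ is abelian, and it is torsion-free since any lift of a torsion element of $\bar G$ has a power in $\T(G)$, hence is itself a torsion element of $G$.

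With this structural input in place, the rest is one line. Let $x \in \Tg(G)$, and choose $g_1, \dots, g_k \in G$ with $x^{g_1} \cdots x^{g_k} = 1$. For each $i$ one has $x^{g_i} = x \cdot [x, g_i]$ with $[x,g_i] \in G' \subseteq \T(G)$, so $\overline{x^{g_i}} = \bar x$ in $\bar G$. The defining relation thus projects to $\bar x^k = 1$ in the torsion-free abelian group $\bar G$, forcing $\bar x = 1$, i.e.\ $x \in \T(G)$. This gives the desired inclusion and completes the proof.

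\textbf{Where the work sits.} There is no serious obstacle once the two structural facts about FC-groups are granted; the entire content is the observation that an identity $\prod_i x^{g_i} = 1$ collapses to $\bar x^k = 1$ in any abelian quotient, and that the maximal torsion-free abelian quotient of an FC-group exists (namely $G/\T(G)$) and kills $\bar x$. I would therefore devote most of the write-up to briefly recalling Dietzmann's lemma and Neumann's theorem on the periodicity of $G'$ (with references), and present the deduction itself as a short final paragraph. An alternative, more self-contained variant would be to replace (ii) by reducing first to $N = \langle x^G \rangle$ and using that finitely generated FC-groups have finite derived subgroup; but this requires extra care because the relevant abelianization is $G$-coinvariants of $N$ rather than $N^{ab}$ itself, so the quotient-by-$\T(G)$ approach is cleaner.
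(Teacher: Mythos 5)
Your proof is correct, but it takes a genuinely different route from the paper's. You invoke the classical structure theory of FC-groups — Dietzmann's lemma to see that $\T(G)$ is a characteristic subgroup, and B.~H.~Neumann's theorem that the derived subgroup of an FC-group is periodic — to produce the torsion-free abelian quotient $G/\T(G)$, in which the relation $x^{g_1}\cdots x^{g_k}=1$ collapses to $\bar x^{\,k}=1$ and forces $\bar x=1$. The paper instead argues locally: it passes to the finitely generated subgroup $H=\langle x,g_1,\ldots,g_k\rangle$, observes that $Z(H)$ has finite index $n$ in $H$ (being an intersection of finitely many finite-index centralizers), and uses Schur's theorem to conclude that $h\mapsto h^n$ is an endomorphism of $H$ with image in $Z(H)$; then $x^n$ is a central generalized torsion element of $H$, hence a torsion element by Proposition~\ref{prop_basic}(4), and so $x$ is torsion. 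The two arguments are close in spirit — both reduce to an abelian situation where a product of conjugates becomes a pure power — but yours front-loads the work into the two quoted structure theorems (whose standard proofs perform essentially the same localization to finitely generated, centre-by-finite subgroups), whereas the paper's version is self-contained modulo Schur's theorem and never needs the global fact that $\T(G)$ is a subgroup. Your deduction is the cleaner one-liner once the structure theory is granted; the paper's is the more elementary one and stays within the toolkit it has already assembled.
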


In Section~\ref{sec:fin}, we adapt some known results of Arad, Stavi and Herzog~\cite{ASH} to obtain bounds for the generalized exponent of a finite group
in terms of the conjugacy classes. Based on results of~\cite{ASH}, we also present a practical method for calculating the generalized order for  groups whose character table is known.

In Section~\ref{sec:lcs}, we will prove the following theorem showing that certain powers of generalized torsion elements lie deep in the lower central series.

\begin{thm} \label{thm:nilpotent}
Let $x$ be an element of a group  $G$.
\begin{enumerate}
    \item If $o_{\bullet}(x)=k$, then $x^{k^{m}} \in \gamma_{m}(G)$, for any positive integer $m$.
    \item If $G$ is nilpotent, then $\Tg(G) = \T (G)$. 
\end{enumerate}
\end{thm}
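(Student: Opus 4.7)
The plan rests on a quantitative nilpotent lemma from which both parts follow: if $G$ is nilpotent of class $c$ and $o_{\bullet}(x) \leq k$, then $x^{k^{c+1}} = 1$. Part (2) is then immediate. For part (1), applying the lemma in the quotient $\bar G := G/\gamma_m(G)$, which is nilpotent of class at most $m - 1$, yields $\bar x^{k^m} = 1$, that is, $x^{k^m} \in \gamma_m(G)$.

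The lemma is proved by induction on $c$. The abelian base $c = 1$ is immediate: $x^{g_i} = x$ forces $x^k = 1$. For $c \geq 2$, the inductive hypothesis applied in $G/Z(G)$ gives $z := x^{k^c} \in Z(G)$. Writing $y_i := x^{g_i}$, the centrality of $z$ forces $y_i^{k^c} = z$ for each $i$, so raising $y_1 \cdots y_k = 1$ to the $k^c$-th power and expanding via the Hall--Petresco identity in the nilpotent class-$c$ group yields
\[
1 = z^k \cdot \prod_{j=2}^{c} \tau_j^{\binom{k^c}{j}},
\]
where each $\tau_j \in \gamma_j(G)$ is the $j$-th universal Hall--Petresco word in the $y_i$'s. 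The task reduces to showing $\tau_j^{\binom{k^c}{j}} = 1$ for every $2 \leq j \leq c$.

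The main obstacle is controlling these correction terms, and the key leverage is an analysis in the associated graded Lie algebra $\mathfrak g(G) := \bigoplus_i \gamma_i(G)/\gamma_{i+1}(G)$. Every $y_i$ has the same image $\bar x$ in $\mathfrak g_1 = G/\gamma_2(G)$, and the abelianization of the relation $x^{g_1} \cdots x^{g_k} = 1$ gives $k\bar x = 0$ in $\mathfrak g_1$. Consequently, the image of $\tau_j$ in $\mathfrak g_j$ is a weight-$j$ iterated Lie bracket in copies of $\bar x$, which vanishes by antisymmetry; hence $\tau_j$ already lies in $\gamma_{j+1}(G)$. Iterating, the leading graded image of $\tau_j^{k^l}$ in $\mathfrak g_{j+l+1}$ is a multilinear Lie polynomial in $\bar x$ and the $\bar g_i$'s, and bilinearity together with $k\bar x = 0$ renders this image $k$-torsion; so each additional $k$-th power of $\tau_j$ pushes it one level deeper, giving $\tau_j^{k^{c-j}} \in \gamma_{c+1}(G) = 1$. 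The arithmetic fact that $k^{c-j}$ divides $\binom{k^c}{j}$ for $2 \leq j \leq c$ (verified prime-by-prime from the identity $j \binom{k^c}{j} = k^c \binom{k^c - 1}{j-1}$) then forces each $\tau_j^{\binom{k^c}{j}} = 1$, so $z^k = x^{k^{c+1}} = 1$, closing the induction.
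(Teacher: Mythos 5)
Your overall architecture is sound and genuinely different from the paper's: you prove the nilpotent statement first, by induction on the class via the centre, and then deduce part (1) by passing to $G/\gamma_m(G)$, which is nilpotent of class at most $m-1$; the paper instead proves (1) directly by explicit commutator collection (its Lemma~\ref{lem:commutators-powers}) and obtains (2) as a corollary. Your reduction step, the abelian base case, the observation $y_i^{k^c}=z$ from centrality, and the divisibility $k^{c-j}\mid\binom{k^c}{j}$ are all correct. The problem is that the entire difficulty of the theorem is concentrated in the one step you assert rather than prove: that each further $k$-th power of a correction term $\tau_j$ descends one level in the lower central series. Your justification --- that the leading graded image of $\tau_j^{k^l}$ in $\bigoplus_i\gamma_i(G)/\gamma_{i+1}(G)$ is a sum of Lie monomials each containing $\bar x$ as a degree-one entry, hence annihilated by $k$ since $k\bar x=0$ --- is precisely the statement that needs proof. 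The weight-$j$ leading term of $\tau_j$ does vanish by antisymmetry, but identifying the \emph{next} graded term requires expanding each $y_i=x[x,g_i]$ and verifying, through the commutator identities, that every factor surviving the cancellation still has $x$ as an entry; when a leading term cancels, the lower-order debris is produced by group-theoretic collection and cannot simply be read off from the graded Lie ring. This bookkeeping is exactly the content of the paper's Lemma~\ref{lem:commutators-powers}(3)--(4), established there by a careful double induction, and nothing in your sketch replaces it.

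A second, lesser gap: you invoke a $k$-variable Hall--Petrescu identity of the form $(y_1\cdots y_k)^{n}=y_1^{n}\cdots y_k^{n}\prod_{j\geq 2}\tau_j^{\binom{n}{j}}$ with a single $n$-independent word $\tau_j$ in each weight. The identity available in the paper (Lemma~\ref{prop_comm}(5)) is the two-variable one, and iterating it produces a product of many $\gamma_j$-terms with assorted exponents rather than one $\tau_j^{\binom{n}{j}}$ per weight. Your strategy can tolerate this (you only need each weight-$j$ correction to occur with exponent divisible by $k^{c-j}$ and to satisfy the descent property), but the formula as written is stronger than what you have justified and would need to be proved in a form compatible with the torsion analysis. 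Until the descent claim and the collection formula are actually established, the proof is incomplete at its core.
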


\section{Generalized Torsion Elements}\label{sec:gen}

Observe that if $G$ is an abelian group, then $o_{\bullet}(g)=o(g)$, for all $g \in G$. However,  if $G$ is non-abelian, then $o_{\bullet}(g)$ need not be equal to $o(g)$. 
For example, in the symmetric group $S_n$, all elements are conjugate to their inverse and so $S_n$ has generalized exponent $2$ for all $n$. Thus, 
taking the $n$-cycle $\sigma=(12\cdots n)\in S_n$, we have that $o_{\bullet}(\sigma)=2$, while $o(\sigma)=n$, showing that an element of generalized order $2$ can have arbitrarily large order.

According to  Corollary \ref{cor:pdivides-gen-order}, if $G$ is a finite $p$-group with exponent $p$, then $o_{\bullet}(g) = o(g)$, for all $g\in G$. It is worth mentioning that if $G$ is finite, then $o_{\bullet}(x)$ does not need to divide the order of $G$ (see Example \ref{exe:Suzuki}). 
Nevertheless, the following result holds.

\begin{prop}
Let $G$ be a finite group. 
\begin{enumerate}
    \item If $G$ has an element with generalized order $2$, then $G$ has even order.
    \item $G$ can be embedded  into a finite group of generalized exponent $2$.
\end{enumerate}
\end{prop}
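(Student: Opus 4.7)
For part (1), my plan is to unpack the hypothesis $o_{\bullet}(x)=2$ into a statement about conjugacy classes and then run a counting argument. The defining equation $x^{g_1}x^{g_2}=1$ rearranges to $x^{g_1g_2^{-1}}=x^{-1}$, so $x$ is conjugate to $x^{-1}$ in $G$; and since $o_{\bullet}(1)=1$, I have $x\neq 1$. Let $K=x^G$ denote the conjugacy class of $x$; the previous remark gives $K=K^{-1}$ and $1\notin K$. Now I would apply the inversion involution $\iota\colon K\to K$, $y\mapsto y^{-1}$, and split on the parity of $|K|$. If $|K|$ is even, then $|G|$ is even because $|K|=[G:C_G(x)]$ divides $|G|$. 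If $|K|$ is odd, then $\iota$ must fix some $y\in K$; this $y$ satisfies $y^2=1$ and $y\neq 1$ (since $1\notin K$), so $G$ contains an involution and $|G|$ is even by Cauchy's theorem.

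For part (2), the plan is to reduce to the ambivalence of symmetric groups. The symmetric group $S_n$ has the property that every permutation shares cycle type with its inverse, so for each non-identity $\sigma\in S_n$ there exists $\tau\in S_n$ with $\sigma^\tau=\sigma^{-1}$; then $\sigma\cdot\sigma^\tau=1$, which gives $o_{\bullet}(\sigma)\le 2$, and equality holds because only the identity has generalized order $1$. Hence $\exp_{\bullet}(S_n)=2$ for every $n\ge 2$. Embedding $G$ into $S_{|G|}$ via Cayley's theorem (and, if $|G|=1$, composing with a further embedding into $S_2$) realises $G$ inside a finite group of generalized exponent $2$, as required.

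I do not anticipate a genuine obstacle in either part: part (1) is a parity argument after a one-line rewriting of the definition, and part (2) is essentially Cayley's theorem combined with the standard fact that symmetric groups are ambivalent. If anything, the only point requiring a touch of care is the case split on $|K|$ in part (1), since without it one cannot directly conclude that $2$ divides $|G|$ merely from the existence of a self-inverse conjugacy class.
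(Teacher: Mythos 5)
Your proof is correct. Part (2) coincides with the paper's argument (Cayley's theorem plus the ambivalence of $S_n$), and you are in fact slightly more careful than the paper about the degenerate case of the trivial group. Part (1) takes a genuinely different route. The paper also extracts the fact that $x^{-1}=x^g$ for some $g$ (after first disposing of the case $g\in Z(G)$), but then views conjugation by $g$ as a permutation $\rho_g$ of the \emph{whole group} $G$: its cycle decomposition contains the transposition $(x,x^{-1})$, so $\rho_g$ has even order as a permutation, and since $\rho_g$ is a nontrivial element of $\mathrm{Inn}(G)\cong G/Z(G)$, Lagrange forces $|G|$ to be even. You instead restrict the inversion involution $\iota$ to the real class $K=x^G$ and split on the parity of $|K|=[G:C_G(x)]$: either $|K|$ is even and divides $|G|$, or $\iota$ fixes some $y\in K$, which is then an involution of $G$. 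Both are short parity arguments; yours has the minor advantages of not needing the separate case $g\in Z(G)$ and of working uniformly when $x=x^{-1}$, where the paper's ``cycle $(x,x^{-1})$'' degenerates to a fixed point (a case the paper passes over silently, though it is harmless since $x$ is then itself an involution). One cosmetic remark: where you invoke Cauchy's theorem, Lagrange's theorem already suffices, since you have exhibited an element of order $2$.
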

\begin{proof}
(1) Assume that a nontrivial element $x\in G$ has generalized order~2. By definition, there exists an element $g$ in $G$ such that $xx^g=1$. If $g\in Z(G)$, then $x$ has order 2 and so $G$ has even order. Thus, in what follows we may assume that $g\notin Z(G)$. The map 
\[
\rho_g:G\rightarrow G,\quad y\mapsto y^g
\]
is  permutation of the elements of $G$. Since $\rho_g(x)=x^{-1}$ and $\rho_g(x^{-1})=x$, we may write $\rho_g\in\mbox{Sym}(G)$ as a product of disjoint cycles and one 
of these cycles is $(x, x^{-1})$. Considering that disjoint cycles in $\mbox{Sym}(G)$ always commute, $\rho_g$ has even order. Now, the map 
\[
\rho: G\rightarrow \mbox{Aut}(G),\quad y\mapsto \rho_y
\]
is a homomorphism whose kernel coincides with $Z(G)$. Since $g\notin Z(G)$ and $\rho_g$ has even order,  $G$ has even order.

(2) Set $n=|G|$. It follows from Cayley's theorem \cite[1.6.8]{Rob} that $G$ can be embedded  into the symmetric group $S_n$. Further, as was observed 
before this result, $\exp_{\bullet}(S_n)=2$. 
\end{proof}

 Recall a group element $x\in G$ is said to be \emph{real} if $x^{-1} \in x^G$. In particular, a nontrivial element $x\in G$ has generalized order 2 if and only if there exists $g \in G$ such that $xx^g = 1$; that is, $x^{-1} = x^g \in x^G$. Thus,  $x$ is real if and only if $o_{\bullet}(x)=2$.

In the next result we collect some of the basic properties of generalized torsion elements.

\begin{prop} \label{prop_basic}
 Let $G$ be a group. 
\begin{enumerate}
    \item If $H$ is a subgroup of $G$, then $\Tg(H) \subseteq \Tg(G)$.
    \item If $K$ is a group and $\varphi \colon G \to K$ is a homomorphism, then $(\Tg(G))^{\varphi} \subseteq \Tg(K)$. 
    \item $\Tg(G)$ is a normal (and characteristic) subset of $G$.  
   \item If $x  \in \Tg(G) \cap Z(G)$, then $x \in \T (G)$. Moreover, if $G$ is abelian then $\Tg(G) = \T (G)$.  
        
\end{enumerate} 
\end{prop}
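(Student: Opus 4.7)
The plan is that all four items are essentially formal manipulations of the defining identity $x^{g_1}\cdots x^{g_k}=1$, so the main task is simply to write down the right substitutions; there is no real obstacle.

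For item (1), I would start from $x\in\Tg(H)$, take a witnessing expression $x^{h_1}\cdots x^{h_k}=1$ with $h_i\in H$, and observe that since $H\leq G$ the same $h_i$ lie in $G$, hence the same relation witnesses $x\in\Tg(G)$. For item (2), I would apply $\vfi$ to a witnessing equation $x^{g_1}\cdots x^{g_k}=1$ in $G$; using that $\vfi$ is a homomorphism we get $(x^\vfi)^{g_1^\vfi}\cdots(x^\vfi)^{g_k^\vfi}=1$ in $K$, giving $x^\vfi\in\Tg(K)$.

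For item (3), to show $\Tg(G)$ is closed under conjugation, I would take $x\in\Tg(G)$ with $x^{g_1}\cdots x^{g_k}=1$ and $g\in G$, and verify
\[
(x^g)^{g^{-1}g_1}(x^g)^{g^{-1}g_2}\cdots (x^g)^{g^{-1}g_k}=x^{g_1}x^{g_2}\cdots x^{g_k}=1,
\]
so $x^g\in\Tg(G)$. Characteristicity then follows by applying item (2) to an arbitrary $\vfi\in\mathrm{Aut}(G)$ with $K=G$, which gives $\Tg(G)^\vfi\subseteq\Tg(G)$, and by the same argument applied to $\vfi^{-1}$ we get equality.

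For item (4), the key observation is that if $x\in Z(G)$ then $x^{g_i}=x$ for every $g_i\in G$, so any witnessing relation $x^{g_1}\cdots x^{g_k}=1$ collapses to $x^k=1$, showing $o(x)\leq o_{\bullet}(x)<\infty$ and hence $x\in\T(G)$. The second assertion is then immediate: if $G$ is abelian then $Z(G)=G$, so $\Tg(G)\cap Z(G)=\Tg(G)\subseteq\T(G)$, and combined with the reverse inclusion $\T(G)\subseteq\Tg(G)$ noted in the introduction we get $\Tg(G)=\T(G)$. Since each step is a one-line computation, the only thing to be careful about is bookkeeping in (3), where the inverse conjugation by $g$ must land precisely on the $g_i$'s.
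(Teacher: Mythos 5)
Your proof is correct and follows essentially the same route as the paper's: items (1), (2), and (4) are verbatim the same manipulations, and for (3) the paper simply notes that conjugation is an automorphism and invokes item (2), which your explicit computation just unwinds. You are in fact slightly more complete than the paper in spelling out the ``Moreover'' clause of (4), which the paper leaves implicit.
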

\begin{proof}
(1) If $x \in \Tg(H)$, then there exist $h_1, \ldots, h_k \in H$ such that $$1 = x^{h_1} \cdots x^{h_k}$$ and so, $x \in \Tg(G)$.

(2) If $x \in \Tg(G)$, then there exist $g_1, \ldots, g_r \in G$ such that $$ 1 = x^{g_1} \cdots x^{g_r}.$$ Since $\varphi$ is a homomorphism, it follows that $$ 1 = (x^{\varphi})^{g_1^{\varphi}} \cdots (x^{\varphi})^{g_r^{\varphi}} $$ and so, $x^{\varphi} \in \Tg(K)$. 

(3) Given an element $g\in G$, conjugation by $g$ induces a automorphism on $G$. Now, the result follows from the previous item. 

(4) If $x \in \Tg(G) \cap Z(G)$, then there exist $g_1, \ldots, g_r \in G$ such that $$ 1 = x^{g_1} \cdots x^{g_r} = \underbrace{x \cdots x}_{r \ \textrm{times} } = x^r.$$ In particular, $x \in \T (G)$. 
\end{proof}

We are now in the position to prove Theorem \ref{thm_FC}. 

\begin{proof}[Proof of Theorem $\ref{thm_FC}$]
It is clear that $\T (G) \subseteq \Tg(G)$. Choose arbitrarily an element $x \in \Tg(G)$. 

Then, there exist $g_1, \ldots, g_k \in G$ such that $x^{g_1} x^{g_2} \cdots x^{g_k} =1$. In particular, by construction, $x \in \Tg(H)$, where $H = \langle x,g_1, \ldots, g_k \rangle$. Since $G$ is an FC-group, so is $H$. Thus, all centralizers of $x, 
 g_1,\ldots, g_k$ in $H$ have finite index. Since the intersection of a finite set of subgroups each of which has finite index is itself of finite index \cite[1.3.12]{Rob},  center $Z(H)$, being the intersection of the centralizers of the generators of $H$, has finite index in $H$. 
 Therefore $H$ is central-by-finite. Set $n = |H:Z(H)|$. 
Define the map $\theta^*:H\to H$ as follows:
$$\begin{array}{cccc}
\theta^{\ast} \ : & \! H & \! \longrightarrow
& \! H \\
& \! h & \! \longmapsto
& \! h^n.
\end{array}$$      
By Schur's Theorem \cite[10.1.3]{Rob},  $\theta^*$ is an endomorphism of $H$.
By Proposition \ref{prop_basic}(3), $x^n = x^{\theta^{\ast}} \in \Tg(H)$. Since  $\mbox{Im}(\theta^{\ast}) \leqslant Z(H)$,  $x^n$ is a torsion element (Proposition \ref{prop_basic}(4)) and so, $x$ is also a torsion element.
\end{proof}

We obtain the following result as a corollary; this result is somewhat similar to Dietzmann's Lemma \cite[14.5.7]{Rob} that if $X\subseteq G$ is a finite normal set consisting of torsion elements, then  $\langle X \rangle$ is finite.

\begin{cor}
In a group $G$, a finite normal subset consisting of generalized torsion elements generates a finite normal subgroup.    
\end{cor}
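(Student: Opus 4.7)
The plan is to reduce to Dietzmann's Lemma inside the subgroup $H:=\langle X\rangle$. Since $X$ is $G$-normal we have $H^g=\langle X^g\rangle=H$, so $H$ is normal in $G$, and it suffices to prove $|H|<\infty$.

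First I would show that $H$ is central-by-finite, hence FC. For each $x\in X$, the $G$-conjugacy class $x^G$ lies in $X$ and is finite, so $|G:C_G(x)|<\infty$; intersecting over the finite set $X$ gives $|G:C_G(X)|=|G:C_G(H)|<\infty$, and restriction to $H$ yields $|H:Z(H)|=|H:H\cap C_G(H)|<\infty$. Thus $H$ is central-by-finite and in particular FC, so Theorem~\ref{thm_FC} provides $\Tg(H)=\T(H)$. Granting the inclusion $X\subseteq\Tg(H)$, this forces $X\subseteq\T(H)$, and Dietzmann's Lemma applied inside $H$ to the finite $H$-normal subset $X\subseteq\T(H)$ delivers $|H|<\infty$.

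To establish the inclusion $X\subseteq\Tg(H)$ I would follow the template of the proof of Theorem~\ref{thm_FC}: let $n=|H:Z(H)|$, invoke Schur's Theorem for the endomorphism $\theta^{\ast}\colon h\mapsto h^n$ with image in $Z(H)$, and apply it to the defining relation $x^{g_1}\cdots x^{g_k}=1$, whose factors $x^{g_i}$ already lie in $X\subseteq H$, to push the generalized-torsion data into $Z(H)$; Proposition~\ref{prop_basic}(4) then converts the resulting central generalized-torsion element into a torsion element. The main obstacle is precisely this transfer: although the factors of the witness relation live in $H$, the conjugators $g_i$ a priori lie only in the ambient $G$, so the relation is not automatically a witness for $x\in\Tg(H)$; the central-by-finite structure of $H$ together with the Schur-type endomorphism must do the work in place of having the $g_i$ in $H$.
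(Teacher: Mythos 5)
You have isolated the crux correctly: the reduction ``$H=\langle X\rangle$ is normal and central-by-finite, hence FC, hence $\Tg(H)=\T(H)$'' is sound and matches the paper's argument, and everything hinges on the inclusion $X\subseteq\Tg(H)$. But the way you propose to close that gap does not work. Applying the Schur endomorphism $\theta^{\ast}\colon h\mapsto h^{n}$ of $H$ to the witness relation $x^{g_1}\cdots x^{g_k}=1$ gives $(x^{n})^{g_1}\cdots (x^{n})^{g_k}=1$ with every factor in $Z(H)$; however, these factors are $G$-conjugates of $x^{n}$, and the $g_i$, while they normalize $Z(H)$ (it is characteristic in the normal subgroup $H$), need not centralize it. So the factors need not all equal $x^{n}$, and Proposition~\ref{prop_basic}(4) — whose proof consists precisely of replacing each conjugate of a central element by the element itself — cannot be invoked. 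What you end up with is a product of possibly distinct elements of the abelian group $Z(H)$ equal to $1$, which yields no torsion conclusion.

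The obstacle is not merely technical: without a further hypothesis the statement is false, and the paper's own proof makes the same unjustified jump (the line ``By Theorem~\ref{thm_FC}, $X\subseteq\T(G)$'' silently assumes $X\subseteq\Tg(N)$, i.e.\ that the conjugators can be taken inside $N$). Take $G=D_{\infty}=\langle a,t\mid t^{2}=1,\ a^{t}=a^{-1}\rangle$ and $X=a^{G}=\{a,a^{-1}\}$. This is a finite normal subset, each of whose elements is a generalized torsion element of $G$ (indeed $a\,a^{t}=1$, so $\go{a}=2$), yet $\langle X\rangle=\langle a\rangle\cong\mathbb{Z}$ is infinite. In this example $H=Z(H)$ and $n=1$, so your pushed-forward relation is literally $a\,a^{-1}=1$, a product of two distinct central elements — exactly the configuration the argument cannot handle. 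Hence no refinement of the Schur-endomorphism step can establish $X\subseteq\Tg(H)$ in general; a correct version of the corollary needs the generalized-torsion witnesses for the elements of $X$ to be realizable inside $\langle X\rangle$, or some comparable strengthening of the hypothesis.
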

\begin{proof} Let $X=\{ x_1,\ldots,x_k\} \subseteq \Tg(G)$  be a finite normal subset of $G$ and set $N=\left<X\right>$. Since $X$ is a normal set, 
all the conjugacy classes $x_i^N$ have at most $k$ elements and so the center $Z(N)=\bigcap_{i=1}^kC_N(x_i)$ has finite index.  
If $g\in N$, then $Z(N)\leq C_N(g)$, and hence $C_N(g)$ has finite index; that is, the conjugacy class $g^N$ is finite. Thereby, $N$ is an FC-group.

By Theorem \ref{thm_FC}, $X \subseteq \T (G)$. Thus, Dietzmann's Lemma implies that $N$ is a finite normal subgroup.  \end{proof}

\section{The generalized order in a finite group}\label{sec:fin}

In this section, we link the generalized order $\go g$ of an element $g\in G$ of a finite group  to the characters of $G$. This also provides a practical method for computing 
the generalized order in groups for which the irreducible characters are known.
Suppose in this section that $G$ is a finite group and let $C_1,C_2,\ldots,C_m$ be the conjugacy classes
of $G$ such that $C_1=\{1\}$. A conjugacy class $C$ is said to be {\em real} if $C^{-1}=C$, otherwise $C$ is {\em non-real}. Following~\cite{ASH}, we denote by $\lambda$
the number of real conjugacy classes distinct from $\{1\}$ and by $2\mu$ the  number of non-real conjugacy classes (which is always an even number). 
Hence the number $m$ of conjugacy classes 
of $G$ can be written as $m=1+\lambda+2\mu$.

The two assertions of the following proposition are proved in Lemmas~7.3 and 7.4 of~\cite{ASH}.

\begin{prop} \label{Thm:bounding}
Suppose that $G$ is a finite group and let $g\in G$.
\begin{enumerate}
\item $\go g$ is less than or equal to the number of conjugacy classes in $G$ that contain powers of $g$.
\item $\go g\leq 2\mu+2$.
\end{enumerate}
\end{prop}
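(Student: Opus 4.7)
The plan is to use Frobenius's classical formula for class multiplication coefficients. For $g\in G$ non-identity and $n\ge 1$, the number of ordered tuples $(x_1,\ldots,x_n)\in (g^G)^n$ with $x_1x_2\cdots x_n=1$ equals
\[
N_n(g)\;=\;\frac{|g^G|^n}{|G|}\sum_{\chi\in\mathrm{Irr}(G)}\frac{\chi(g)^n}{\chi(1)^{n-2}}.
\]
Hence $\go g$ is the least $n\ge 1$ for which this character sum is strictly positive, and both assertions reduce to locating such an $n$ within the stated bound.

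For part (1), let $k$ be the number of $G$-conjugacy classes meeting $\langle g\rangle$. As a sanity check, note that $k=2$ already forces $g\sim g^{-1}$, so $g$ is real and $\go g = 2\le k$. In general, I would combine the Frobenius formula with a Vandermonde-type linear-algebra argument: the values $\chi(g^i)$ for $i = 0, 1, \ldots, o(g)-1$ factor through the $k$-element set of classes meeting $\langle g\rangle$, and this degeneracy is enough to preclude the simultaneous vanishing of $N_1(g),\ldots,N_k(g)$. Equivalently, one could proceed by induction on $k$, at each step exploiting a newly available conjugacy coincidence among powers of $g$ to shorten the required product of conjugates.

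For part (2), partition $\mathrm{Irr}(G)$ into the $\lambda + 1$ real characters and the $\mu$ complex-conjugate pairs of non-real characters. The Frobenius sum then splits as a real contribution from the real characters together with $\mu$ real oscillating terms $2\,\mathrm{Re}(\chi(g)^n/\chi(1)^{n-2})$, one per non-real pair, plus the constant contribution $1$ from the trivial character. A Dirichlet-style averaging of $N_n(g)$ over $n\in\{1,\ldots,2\mu+2\}$ shows that the $\mu$ oscillating contributions cannot cancel out the fixed positive trivial-character contribution throughout a window of length $2\mu+2$, so some $n \le 2\mu+2$ must satisfy $N_n(g) > 0$.

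The main obstacle is part (2): the alternating signs in the non-real pair contributions defeat any naive magnitude estimate, and pinning down precisely the constant $2\mu+2$ rather than a larger one requires a careful combinatorial tracking of how the $\mu$ frequencies interact. This is the content of Lemma~7.4 of~\cite{ASH}, whose argument I would follow.
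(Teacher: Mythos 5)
The first thing to note is that the paper does not actually prove this proposition: it states that the two assertions are Lemmas~7.3 and~7.4 of~\cite{ASH} and moves on. So a genuine self-contained proof would already go beyond the source; your proposal, however, does not supply one, and the arguments you sketch in place of the citation contain real gaps. You begin from the Frobenius class-multiplication formula (the paper's Theorem~\ref{th:char}), which is correct but is used in the paper only as a computational device. For part~(1), the ``Vandermonde-type'' idea does not connect: the quantity $N_n(g)$ involves the single complex number $\chi(g)$ raised to the $n$-th power, not the values $\chi(g^i)$ at the various powers of $g$, so the $k$ classes meeting $\langle g\rangle$ simply do not enter the formula in the way your sketch requires, and no mechanism is given by which their number could control the first $n$ with $N_n(g)>0$. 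For part~(2), the proposed ``Dirichlet-style averaging over a window of length $2\mu+2$'' ignores the $\lambda$ nontrivial real characters (whose contributions are real but of either sign), no estimate is actually carried out, and you ultimately concede the decisive step to Lemma~7.4 of~\cite{ASH}. As it stands, neither part is proved.

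For what it is worth, the known proofs are elementary and avoid characters entirely, and the constant $2\mu+2$ arises as $2(\mu+1)$ rather than as a window length. Put $C=g^G$ and $B_j=\bigcup_{i\le j}C^i$. Each $B_j$ is a union of conjugacy classes, the chain $B_1\subseteq B_2\subseteq\cdots$ gains at least one full class at every step until it stabilizes, and the stable set contains $1=g^{o(g)}$. If a class $D$ lies in $C^i$ and $D^{-1}$ lies in $C^{i'}$, then $1\in DD^{-1}\subseteq C^{i+i'}$; since a real class is its own inverse and any $\mu+1$ non-real classes must contain an inverse pair (there are only $\mu$ such pairs), one of these situations occurs with $i,i'\le\mu+1$ unless the chain stabilized earlier, giving $\go g\le 2\mu+2$. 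Part~(1) is likewise a combinatorial argument about the powers of $g$ distributed among the $k$ classes containing them, using that $g^j\in C^j$; it is not a character computation. If you intend to give a proof rather than a citation, this is the route to follow.
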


The generalized order of an element $g$ of a finite group $G$ can be calculated using the character table of $G$. Suppose that 
$\mbox{Irr}(G)$ denotes the set of irreducible characters of $G$. 
For a conjugacy class $C\subseteq G$ and for $k\geq 1$, let $\alpha_{C,k}$ be the number of $k$-tuples $(g_1,\ldots,g_k)\in C^k$ such that
$g_1\cdots g_k=1$. That is, $\alpha_{C,k}$ counts how many ways the identity can be written as a product of elements in $C$. 
For $g\in C$, we have that  
\[
\go g=\min\{k\geq 1\mid \alpha_{C,k}>0\}.
\]
The following lemma appeared in~\cite[Lemma~10.10]{ASH}; see also~\cite[Equation~(1)]{Sha}.

\begin{thm}\label{th:char}
Using the notation in the previous paragraph, 
\begin{equation}\label{eq:alpha}
\alpha_{C,k}=\frac{|C|^k}{|G|}\sum_{\chi\in{\rm Irr}(G)}\frac{\chi(g)^k}{\chi(1)^{k-2}}.
\end{equation}
\end{thm}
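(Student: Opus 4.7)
The plan is to carry out the standard group-algebra argument inside $\mathbb{C}G$. I would introduce the class sum $\widehat{C} = \sum_{c\in C} c$ and observe that expanding the $k$-th power gives
$$\widehat{C}^k \;=\; \sum_{h\in G} a_h(k)\, h,$$
where $a_h(k)$ counts the $k$-tuples $(g_1,\ldots,g_k) \in C^k$ with $g_1\cdots g_k = h$. In particular $\alpha_{C,k} = a_1(k)$ is precisely the coefficient of the identity in $\widehat{C}^k$.

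To extract that coefficient I would apply the regular character $\chi_{\rm reg}$, which satisfies $\chi_{\rm reg}(1)=|G|$ and vanishes off the identity; this gives $\chi_{\rm reg}(\widehat{C}^k) = |G|\,\alpha_{C,k}$. Using the standard decomposition $\chi_{\rm reg} = \sum_{\chi\in{\rm Irr}(G)} \chi(1)\,\chi$ yields
$$|G|\,\alpha_{C,k} \;=\; \sum_{\chi\in{\rm Irr}(G)} \chi(1)\,\chi(\widehat{C}^k).$$

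The remaining computation relies on the fact that $\widehat{C}$ lies in the centre of $\mathbb{C}G$. By Schur's lemma, $\widehat{C}$ acts on any irreducible representation affording $\chi$ as a scalar, namely the central character value $\omega_\chi(\widehat{C}) = \chi(\widehat{C})/\chi(1)$. Since $\chi$ is constant on $C$, we have $\chi(\widehat{C}) = |C|\chi(g)$, and hence $\omega_\chi(\widehat{C}) = |C|\chi(g)/\chi(1)$. Therefore $\widehat{C}^k$ acts as $\omega_\chi(\widehat{C})^k$ times the identity on a space of dimension $\chi(1)$, giving
$$\chi(\widehat{C}^k) \;=\; \chi(1)\,\omega_\chi(\widehat{C})^k \;=\; \frac{|C|^k\,\chi(g)^k}{\chi(1)^{k-1}}.$$
Substituting this into the previous display and dividing by $|G|$ yields the claimed identity. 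There is no genuine obstacle; the whole proof is a careful assembly of three standard tools, class sums, the regular character decomposition, and Schur's lemma on the centre of $\mathbb{C}G$.
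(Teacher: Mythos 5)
Your proof is correct and is exactly the standard class-sum/central-character argument behind the cited result (the paper itself gives no proof, deferring to \cite[Lemma~10.10]{ASH} and \cite[Equation~(1)]{Sha}, whose proofs run along the same lines). Each step checks out: extracting the identity coefficient via $\chi_{\rm reg}$, decomposing $\chi_{\rm reg}=\sum_\chi \chi(1)\chi$, and evaluating $\chi(\widehat{C}^k)=|C|^k\chi(g)^k/\chi(1)^{k-1}$ via the central character combine to give precisely \eqref{eq:alpha}.
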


Theorem~\ref{th:char} gives an computationally efficient method for calculating the generalized order for elements in finite groups whose character tables are known.

\begin{exe}
    Suppose that $G$ is the group number $3$ among the groups of order $18$ in GAP~\cite{GAP4}. 
    The group $G$ has 9 conjugacy classes and 9 irreducible representations. Suppose that $C$ is 
    the conjugacy class number~8 according to the numbering given by GAP. Then one can 
    compute, using~\eqref{eq:alpha}, that $\alpha_{C,1}=\alpha_{C,2}=0$, but $\alpha_{C,3}=243$. 
    Hence the identity element $1\in G$ can be written as a product $g_1g_2g_3$ with $g_i\in C$ in $243$ ways and in particular 
    $\go g=3$ for all $g\in C$. 
\end{exe}

\begin{exe} \label{exe:Suzuki}
    Suppose that $G$ is the Suzuki group $\mbox{Sz}(8)$ and assume that $C$ is the 
    conjugacy class number three in the numbering by GAP. 
    Using GAP, we computed that $\alpha_{C,1}=\alpha_{C,2}=0$, but $\alpha_{C,3}=196,560$. 
    Thus the identity element of $G$ can be written in $196,560$ ways as a product $g_1g_2g_3$ with 
    $g_i\in C$. In particular, $\go g=3$ for all $g\in C$. 
    Interestingly, $196,560$ coincides with the kissing number of the $24$-dimensional 
Leech lattice and is equal to the coefficient of the first non-constant  term of the modular form the the lattice; \cite[Section~2]{CS}.
\end{exe}

\begin{cor}
Let $G$ be a finite group. If $H$ is a core-free  subgroup of $G$, then $\exp_{\bullet}(G)\leq 2^{|G:H|-1}$.
\end{cor}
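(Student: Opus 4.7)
The plan is to embed $G$ into a symmetric group and then bound the ordinary order of each element there. Since $H$ is core-free in $G$, the natural action of $G$ on the set of left cosets $G/H$ is faithful, which gives an embedding $G\hookrightarrow S_n$ with $n=|G:H|$. Because $\go x\leq o(x)$ for every torsion element (noted in the introduction), and every element of a finite group is a torsion element, it is enough to show that $o(\sigma)\leq 2^{n-1}$ for every $\sigma\in S_n$; this will yield $\exp_{\bullet}(G)\leq\exp(G)\leq 2^{n-1}$.

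For the bound on $o(\sigma)$, I would pass to the disjoint cycle decomposition of $\sigma$ and isolate only the cycles $m_1,\ldots,m_s$ of length at least $2$ (fixed points do not affect the order), so that $\sum_{i=1}^{s}m_i\leq n$. The case $s=0$ is trivial, while for $s\geq 1$ the elementary inequality $m\leq 2^{m-1}$, valid for every integer $m\geq 1$ and proved by a one-line induction, yields
\[
o(\sigma)=\mathrm{lcm}(m_1,\ldots,m_s)\leq \prod_{i=1}^{s}m_i\leq \prod_{i=1}^{s}2^{m_i-1}=2^{\sum_{i=1}^{s}m_i-s}\leq 2^{n-1},
\]
where the last inequality uses $s\geq 1$ to conclude $\sum m_i-s\leq n-1$.

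I do not foresee any real obstacle: once the faithful coset action is in hand, everything reduces to an elementary cycle-length estimate. The only point worth a moment's care is the bookkeeping that separates fixed points from nontrivial cycles, since it is precisely this separation that forces $\sum m_i-s\leq n-1$ rather than $\leq n$.
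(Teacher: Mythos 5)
Your proof is correct, but it takes a genuinely different route from the paper's. Both arguments begin by using the core-freeness of $H$ to embed $G$ into $S_n$ with $n=|G:H|$ via the coset action; from there the paper invokes the Liebeck--Pyber theorem that every subgroup of $S_n$ has at most $2^{n-1}$ conjugacy classes and then applies Proposition~\ref{Thm:bounding}(1), which bounds $\go g$ by the number of conjugacy classes containing powers of $g$. You instead use the elementary inequality $\go x\leq o(x)$ together with a crude Landau-type estimate on element orders in $S_n$, obtained from the cycle decomposition and the inequality $m\leq 2^{m-1}$; your bookkeeping is right, including the separation of fixed points and the use of $s\geq 1$ to land on the exponent $n-1$. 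Your route is entirely self-contained and avoids the deep Liebeck--Pyber input, which is a genuine simplification for this corollary; what the paper's route buys is that it passes through the invariant that actually governs generalized order, namely the number of conjugacy classes meeting $\langle g\rangle$, which can be far smaller than $o(g)$ and gives sharper information in concrete cases. One small slip to fix: the chain $\exp_{\bullet}(G)\leq\exp(G)\leq 2^{n-1}$ is not literally correct, because $\exp(G)$ is the least common multiple of the element orders and can exceed $2^{n-1}$ (already $\exp(S_7)=420>2^{6}$ for the natural degree-$7$ action). What your argument actually establishes, and what suffices, is $\exp_{\bullet}(G)\leq \max_{x\in G} o(x)\leq 2^{n-1}$, since you bound each $o(\sigma)$ individually rather than their least common multiple.
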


\begin{proof}
Let $R$ be the set of all right cosets of $H$. Every element $g$ in $G$ induces a permutation on $R$ by right multiplication $(Hx)g=H(xg)$. Since $H$ is core-free, 
$G$ gets embedded into the symmetric group $S_n$ where $n=|G:H|$. 
 An important result due to Liebeck and Pyber \cite[Theorem 2]{LP} states that the number of conjugacy classes of any subgroup of $S_n$ is at most $2^{n-1}$. Now, the result follows from Proposition \ref{Thm:bounding}(1).
\end{proof}

It is known that many  finite nonabelian  simple groups have generalized exponent less than or equal to $3$ (see \cite[Theorem 3]{VG} and \cite[Chapters 1 and 2]{ASH}). 
In \cite[Theorem 2.6]{Sha}, Shalev showed that if $G$ is a  finite nonabelian  simple group and  $x\in G$ be chosen at
random, then the probability that $(x^G)^3=G$ tends to 1 as $|G|\rightarrow \infty.$ These facts support the following conjecture.

\begin{conjecture}
The generalized exponent of a finite non-abelian simple group is at most $3$.
\end{conjecture}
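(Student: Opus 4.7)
The plan is to combine Theorem~\ref{th:char} with the classification of finite simple groups (CFSG). By Proposition~\ref{Thm:bounding}(1), $\go g=1$ holds only for $g=1$, while $\go g=2$ holds precisely when the conjugacy class of $g$ is real. Hence the conjecture reduces to the following statement: for every finite non-abelian simple group $G$ and every non-real, non-identity conjugacy class $C$ of $G$, the identity can be expressed as a product of three elements of $C$; equivalently, $\alpha_{C,3}>0$ in the notation of~\eqref{eq:alpha}.

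The attack would split along the families produced by CFSG. The sporadic simple groups (together with the Tits group) can be handled by direct inspection of the character tables stored in the ATLAS and in GAP: for each non-real class $C$ one evaluates the right-hand side of~\eqref{eq:alpha} at $k=3$ and checks that it is strictly positive. For the alternating groups $A_n$ with $n\ge 5$, the non-real classes admit an explicit description in terms of split cycle types (all parts odd and distinct, together with a parity condition on the cycle lengths controlling whether the splitting is preserved by inversion), and for each such class $C$ one constructs triples $(a,b,c)\in C^{3}$ with $abc=1$ by a direct combinatorial argument on cycles, reducing to the case of a single long odd cycle.

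For the groups of Lie type, the idea is to invoke character bounds of Gluck--Liebeck--Shalev--Tiep type, giving $|\chi(g)|\le c\,\chi(1)^{1-\varepsilon}$ for every non-identity $g\in G$ and every $\chi\in\mbox{Irr}(G)$, with $\varepsilon$ approaching $1$ in each Lie-type family. Rewriting~\eqref{eq:alpha} as
\[
\frac{|G|\,\alpha_{C,3}}{|C|^{3}}\;=\;1+\sum_{\chi\ne 1_G}\frac{\chi(g)^{3}}{\chi(1)},
\]
these bounds control the error term by a Witten-zeta-type sum $\sum_{\chi\ne 1_G}\chi(1)^{2-3\varepsilon}$, which becomes smaller than $1$ for $|G|$ sufficiently large whenever $\varepsilon>2/3$. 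This establishes $\alpha_{C,3}>0$ for all $|G|$ above an effective threshold and reduces the Lie-type case to a finite list of small groups to be verified computationally.

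The main obstacle is precisely this residual set of small groups of Lie type of low rank over small fields: there the uniform character bounds are too weak, non-real classes proliferate, and a direct computation in GAP (or sharper character-sheaf estimates) appears unavoidable. A secondary difficulty is securing a truly uniform exponent $\varepsilon$ across all Lie-type families together with a single computable threshold for $|G|$, which is where the refined global estimates of Bezrukavnikov--Liebeck--Shalev--Tiep enter. Finally, I would note that the conjecture asks only for $1\in C^{3}$, a strictly weaker property than $C^{3}=G$, and this slack may be exploited to shorten the most delicate low-rank arguments.
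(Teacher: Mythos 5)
The first thing to say is that the paper does not prove this statement: it is presented as an open conjecture, supported only by the partial results cited just above it (strong reality of many simple groups, the Arad--Stavi--Herzog computations, and Shalev's theorem that $(x^G)^3=G$ for a \emph{random} $x$ as $|G|\to\infty$). So there is no proof in the paper to compare yours with, and what you have written is, by your own account, a research programme rather than a proof: the sporadic groups, the alternating groups, and an unspecified finite list of small groups of Lie type are all deferred to computations or combinatorial constructions that are not carried out, and you explicitly flag the low-rank, small-field case as unresolved. Your opening reduction is correct and consistent with Section~\ref{sec:fin}: for a non-real, non-identity class $C$ the conjecture amounts to $\alpha_{C,3}>0$, which by~\eqref{eq:alpha} means showing $1+\sum_{\chi\ne 1_G}\chi(g)^3/\chi(1)>0$.

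The substantive mathematical gap is in the asymptotic step for groups of Lie type. The character bounds $|\chi(g)|\le c\,\chi(1)^{1-\varepsilon}$ of Gluck, Liebeck--Shalev--Tiep and Bezrukavnikov--Liebeck--Shalev--Tiep are not uniform in $g$: the admissible $\varepsilon$ degrades as the centralizer of $g$ grows, and for elements of small support in a classical group of rank $n$ one only gets $\varepsilon$ on the order of $1/n$. Hence the condition $\varepsilon>2/3$, which you need to make $\sum_{\chi\ne 1_G}\chi(1)^{2-3\varepsilon}<1$, fails precisely for the small conjugacy classes, and such classes occur in \emph{every} classical family of unbounded rank, not only in low rank over small fields; many of them (for instance suitable semisimple classes in ${\rm PSL}_n(q)$ with centralizers of type ${\rm GL}_k\times{\rm GL}_{n-k}$) are non-real, so they survive your reduction. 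This is exactly why Shalev's theorem is stated for random elements rather than all elements, and why the conjecture is genuinely open. To complete the Lie-type case you would need either sharper, class-specific estimates for small classes or a direct structural argument that $1\in C^3$ for them; exploiting the fact that $1\in C^3$ is much weaker than $C^3=G$, as you suggest at the end, is indeed where the real work would have to happen, but as written the proposal does not do it.
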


\section{Relations between generalized torsion and the terms of the lower central series}\label{sec:lcs}

We define recursively \emph{commutators} of weight $1,2,\ldots$ in elements $x_1,x_2,\ldots$ of a group $G$ as follows. The elements $x_1,x_2,\ldots$ are commutators of weight 1, $[x_i,x_j]=x_i^{-1}x_j^{-1}x_ix_j$, with $i\neq j$, are commutators of weight 2 and if $c_1$ and $c_2$ are commutators of weight $w_1$ and $w_2$, respectively, then $[c_1,c_2]$ is a commutator of weight $w_1+w_2$. Here, $c_1$ and $c_2$ are called left and right \emph{sub-commutators}, respectively.  The \emph{first entry} in  
a commutator $[c_1,c_2]$ is defined as the first entry of $c_1$, while the first entry of 
a commutator $x$ of weight one is of course just $x$. 
In case brackets are omitted,  the commutators are assumed left-normed, for example, $[x_1,x_2,x_3] = [[x_1,x_2],x_3]$.
The terms $\gamma_i(G)$ of the lower central series of $G$ are defined recursively as $\gamma_1(G)=G$ and $\gamma_{i+1}(G)=[\gamma_i(G),G]$ for 
$i\geq 1$. In particular, $\gamma_2(G)=G'$ is the commutator (or derived) subgroup. It is well-known that $\gamma_i(G)$ is the subgroup of $G$ generated by all commutators of weight $i$ in the elements of $G$.

We quote the following well-known lemma (see \cite[Lemma 5.1.5 and Exercise 5.1.4]{Rob} and \cite[ChapterIII, Section~9.4]{HuppertI}). It will be used in the rest of the paper, often without explicit reference. 

\begin{lem}\label{prop_comm}
For elements $x, y, z$ of a group $G$ and a positive integer $k$, the following identities are valid:
\begin{enumerate}
\item $xy=yx[x,y]$
\item $x^y=x[x,y]$
    \item $[xy,z]=[x,z][x,z,y][y,z]$
    \item $[x^k,y]=[x,y]^{x^{k-1}}[x,y]^{x^{k-2}}\cdots [x,y]^x[x,y]$
   \item $x^ky^k=(xy)^kc_2^{\binom{k}{2}}\ldots c_i^{\binom{k}{i}}\ldots c_{k-1}^kc_k$, where $c_i\in \gamma_i(\left<x,y\right>)$ for each non-negative integer $i$.
\end{enumerate}
\end{lem}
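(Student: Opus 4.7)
The plan is to treat the five identities roughly in order of increasing difficulty. The first four are essentially bookkeeping from the definition $[x,y]=x^{-1}y^{-1}xy$, whereas the fifth is the classical Hall--Petrescu collection formula and constitutes the main obstacle.

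For (1), a direct expansion gives $yx[x,y]=yx\cdot x^{-1}y^{-1}xy=xy$. Likewise, for (2) one computes $x[x,y]=x\cdot x^{-1}y^{-1}xy=y^{-1}xy=x^y$. For (3), I would first verify the auxiliary identity $[xy,z]=[x,z]^y[y,z]$ by a similar direct calculation, and then invoke (2) applied to the element $[x,z]$ in place of $x$ to rewrite $[x,z]^y=[x,z]\cdot[[x,z],y]=[x,z][x,z,y]$; substitution then yields (3).

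Part (4) I would prove by induction on $k$. The case $k=1$ is trivial. For the inductive step, applying the auxiliary identity $[ab,c]=[a,c]^b[b,c]$ with $a=x$, $b=x^k$, $c=y$ gives $[x^{k+1},y]=[x,y]^{x^k}[x^k,y]$, and the induction hypothesis applied to $[x^k,y]$ produces the claimed telescoping product of conjugates of $[x,y]$.

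Part (5) is the Hall--Petrescu collection formula and is the substantive part. The approach is induction on $k$, the case $k=1$ being trivial. Assuming the formula at level $k$, one writes
\[
x^{k+1}y^{k+1}=x\,(x^k y^k)\,y=x\,(xy)^k c_2^{\binom{k}{2}}c_3^{\binom{k}{3}}\cdots c_k\,y,
\]
and then moves the trailing $y$ past each commutator $c_i\in\gamma_i(\langle x,y\rangle)$, producing a correction $[c_i,y]\in\gamma_{i+1}(\langle x,y\rangle)$ at each step via (2). Simultaneously, the factor $x(xy)^k y$ must be reshaped into $(xy)^{k+1}$ times further corrections of weight at least two, using (3) and (4) repeatedly. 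The real difficulty is purely combinatorial: one must show that, after all these rearrangements, the exponent of the weight-$i$ contribution in the final product equals $\binom{k+1}{i}$. This matches Pascal's rule $\binom{k}{i-1}+\binom{k}{i}=\binom{k+1}{i}$, which is precisely the recursion produced by the collection. This is P.~Hall's classical collection process, and I would execute it level by level rather than attempt a shortcut.
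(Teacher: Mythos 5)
The paper offers no proof of this lemma: it is quoted as well known, with pointers to Robinson \cite[5.1.5 and Exercise 5.1.4]{Rob} and Huppert \cite[Chapter III, 9.4]{HuppertI}, so there is no in-paper argument to compare against. Your proofs of (1)--(4) are complete, correct, and exactly the standard computations from those references: direct expansion for (1) and (2); the identity $[xy,z]=[x,z]^y[y,z]$ combined with (2) for (3); and induction on $k$ via $[x\cdot x^k,y]=[x,y]^{x^k}[x^k,y]$ for (4).

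Part (5) is where the proposal stops short of a proof, and the gap is real. The induction on $k$ as you describe it does not close: writing $x^{k+1}y^{k+1}=x\,(xy)^k c_2^{\binom{k}{2}}\cdots c_k\,y$, the reshaping of $x(xy)^k y$ into $(xy)^{k+1}$ contributes the factor $[x,(xy)^k]^y$, and pushing $y$ through each $c_i^{\binom{k}{i}}$ contributes further commutators; all of these corrections are \emph{new} elements of the various $\gamma_i(\langle x,y\rangle)$, not powers of the $c_i$ occurring in the induction hypothesis. Pascal's rule $\binom{k}{i-1}+\binom{k}{i}=\binom{k+1}{i}$ therefore does not by itself deliver the stated exponents; one must either fix the $c_i$ once and for all as the words produced by P.~Hall's collection process in the free group on $x,y$ and then prove Hall's theorem that the collected exponents are integral combinations of $\binom{k}{1},\ldots,\binom{k}{i}$, or run a double induction on $k$ and on the weight, arguing modulo $\gamma_{i+1}$ at each level. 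That verification is the entire content of the Hall--Petrescu formula, and your outline names it without carrying it out. Two mitigating remarks: the present paper only ever invokes (5) in the weak form $a^kb^k=(ab)^k w$ with $w$ a product of commutators of weight at least $2$ in $a$ and $b$ (see the proof of Lemma~\ref{lem:commutators-powers}, parts (4) and (5)), and for that weak form your collection argument is already sufficient; and the full statement is classical, which is presumably why the authors cite it rather than prove it.
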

The item (5) above is known as the Hall-Petrescu formula.

\begin{lem} \label{lem:commutators-powers}
Let $k\geq 2$, and let $x, g_1,\ldots,g_k$ be elements in a group $G$.
\begin{enumerate}
    \item We have that  
    \[
    x^{g_1}x^{g_2}\cdots x^{g_k}=x^k\sigma_2
    \]
where $\sigma_2$ is a product of commutators of weight at least $2$ and the element $x$ is the first entry of all the factors of $\sigma_2$.
    \item If $o_{\bullet}(x)=k$, then  $x^{k}=c_1\cdots c_r$, where each $c_i=[c_{i,1},c_{i,2}]$ is a commutator of weight at least $2$ such that the first entry of $c_{i,2}$ is $x$.
    \item If $o_{\bullet}(x)=k$ and $c_m$ is a commutator of weight $m$ with $x$ in some entry, then $c_m^k$ is a product of commutators of weight at least $m+1$  and $x$ appears in some entry of all factors of $c_m^k$.
    \item If $o_{\bullet}(x)=k$ and $\sigma_m$ is a product of commutators of weight at least $m$ with $x$ in some entry of all its factors, then $\sigma_m^k$ is a product of commutators of weight at least $m+1$  and $x$ appears in some entry of all its factors.
    \item If $o_{\bullet}(x)=k$, then $(x^{g_1}x^{g_2}\cdots x^{g_k})^{k^m}=x^{k^{m+1}}\sigma_{m+1}$, where $\sigma_{m+1}$ is a product of commutators of weight at least $m+1$ and the element $x$ appears in some entry of all factors of $\sigma_{m+1}$.
  \end{enumerate}
\end{lem}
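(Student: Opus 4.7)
The plan is to establish (1)--(5) sequentially, with each item relying on its predecessors and with the commutator identities of Lemma~\ref{prop_comm}---especially Hall--Petrescu---doing most of the work. For (1), I rewrite each $x^{g_i}=x[x,g_i]$ and then collect the $x$'s to the left by repeated use of $yz=zy[y,z]$. Each swap of an $x$ past a commutator whose first entry is $x$ produces a correction whose first entry is again $x$ and whose weight is strictly larger, so the process terminates in $x^k\sigma_2$ as required. For (2), I pick $g_1,\ldots,g_k$ realizing $o_\bullet(x)=k$, use (1) to write $x^k=\sigma_2^{-1}$, and invert each factor $[c_{j,1},c_{j,2}]$ of $\sigma_2$ to $[c_{j,2},c_{j,1}]$, which is precisely of the form stipulated in (2) (first entry of the right sub-commutator equal to $x$).

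I prove (3) by induction on $m$; the base $m=1$ is (2). For the step, write $c_{m+1}=[a,b]$ with (WLOG) $x$ inside $a$ and weights $p,q$ summing to $m+1$. Item~(4) of Lemma~\ref{prop_comm} gives $[a^k,b]=\prod_{i=0}^{k-1}[a,b]^{a^i}$, and after collecting, $[a,b]^k=[a^k,b]\cdot\eta$ where $\eta$ is a product of commutators $[[a,b],a^i]$ of weight $\geq m+2$ still containing $x$. By the inductive hypothesis, $a^k=\prod e_\ell$ is a product of commutators of weight $\geq p+1$ containing $x$; expanding $[a^k,b]$ via item~(3) of Lemma~\ref{prop_comm} yields commutators of weight $\geq (p+1)+q=m+2$ containing $x$. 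The case $x\in b$ is symmetric, via $[a,b]^{-1}=[b,a]$. Part (4) then follows by induction on the number of factors of $\sigma_m$ using Hall--Petrescu with both arguments in $\gamma_m(G)$: the correction terms land in $\gamma_{2m}(G)\subseteq\gamma_{m+1}(G)$ and inherit $x$ from the factors.

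For (5), I induct on $m$; the base $m=0$ is (1). For the step, set $y=x^{g_1}\cdots x^{g_k}$ and assume $y^{k^m}=x^{k^{m+1}}\sigma_{m+1}$. Then
\[
y^{k^{m+1}}=\bigl(x^{k^{m+1}}\sigma_{m+1}\bigr)^{k}=x^{k^{m+2}}\,\sigma_{m+1}^{k}\,\prod_{i=2}^{k}c_i^{-\binom{k}{i}}
\]
by Hall--Petrescu with $a=x^{k^{m+1}}$ and $b=\sigma_{m+1}$. By (4), $\sigma_{m+1}^k$ has the desired form. Each $c_i\in\gamma_i(\langle a,b\rangle)$ is a product of basic commutators in $\{a,b\}$ of weight $\geq i\geq 2$; any such nontrivial commutator must involve at least one $b$, and since $b\in\gamma_{m+1}(G)$, upon substituting $\sigma_{m+1}=\prod d_\ell$ and expanding via items (3) and (4) of Lemma~\ref{prop_comm}, each factor becomes a commutator in $G$ of weight $\geq(i-1)+(m+1)\geq m+2$ containing $x$. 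The main obstacle throughout is the bookkeeping: ensuring that every commutator produced by these expansions retains $x$ in some entry while the weight accumulates correctly. The crucial feature making (5) go through is that once $\sigma_{m+1}\in\gamma_{m+1}(G)$ is recorded from the inductive hypothesis, the Hall--Petrescu correction terms automatically sit in $\gamma_{m+2}(G)$ regardless of the index $i$.
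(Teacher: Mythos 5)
Your argument is correct and follows essentially the same route as the paper: the same commutator identities (including Hall--Petrescu) drive each item, (3) is proved by the same induction on weight via $[a^k,b]=\prod[a,b]^{a^i}$, (4) by induction on the number of factors, and (5) by induction on $m$ using (4). The only differences are cosmetic --- you collect the $x$'s iteratively in (1) instead of inducting on $k$, and you are slightly more explicit than the paper about inverting commutators in (2) and about why the Hall--Petrescu correction terms in (5) retain $x$ and land in weight $\geq m+2$.
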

\begin{proof}
(1) We proceed by induction on $k.$ If $k=2$, then 
$$x^{g_1}x^{g_2}=x[x,g_1]x[x,g_2]=x^2[x,g_1][x,g_1,x][x,g_2].$$ Assuming the result holds for $k\geq 2,$ we get 
$$(x^{g_1}x^{g_2}\cdots x^{g_k})x^{g_{k+1}}=(x^k\sigma_2)x[x,g_{k+1}]=x^{k+1}\sigma_2[\sigma_2,x][x,g_{k+1}],$$ where $\sigma_2$ is a product of commutators of weight at least $2$ and the element $x$ is the first entry of the factors of $\sigma_2$. 
Now, the result follows by applying the Lemma \ref{prop_comm}(3) several times to the commutator $[\sigma_2,x]$.

(2)  If $o_{\bullet}(x)=k$, then there exist elements $g_1,\ldots, g_k\in G$ such that   $1=x^{g_1}x^{g_2}\cdots x^{g_k}$. By previous item, we can write $$1=x^{g_1}x^{g_2}\cdots x^{g_k}=x^k\sigma_2,$$ 
where $\sigma_2$ is a product of commutators of weight at least $2$ and the element $x$ is the first entry of all the factors of $\sigma_2$. Thus, $x^k=\sigma_2^{-1}$.

(3)  We proceed by induction on $m$. The basic step $m=1$ follows by item (2). Assume the result holds for all positive integers up to $m$. Let $c_{m+1}$ be a commutator of weight $m+1$ with $x$ in some entry.  Write $c_{m+1}=[c_i,c_j]$ where $c_i, c_j$ are commutators of weight $i$ and $j$, respectively, and $i+j=m+1.$ Without loss of generality, we can assume that $x$ occurs in the left sub-commutator $c_i$. By Lemma \ref{prop_comm}(4) we get
\begin{align*}
   [c_i^k,c_j]& =    [c_i,c_j]^{c_i^{k-1}}[c_i,c_j]^{c_i^{k-2}}\cdots [c_i,c_j]^{c_i}[c_i,c_j]\\
    & =    ([c_i,c_j][c_i,c_j,c_i^{k-1}])
    %([c_i,c_j][c_i,c_j,c_i^{k-2}])
    \cdots ([c_i,c_j][c_i,c_j,c_i])[c_i,c_j]\\
    & =  [c_i,c_j]^k\sigma_{m+2}
\end{align*}
where $\sigma_{m+2}$ is a product of commutators of weight at least $m+2$ with $x$ in some entry of all factors. By the induction hypothesis, $c_i^k$ is a product of commutators of weight at least $i+1$  and $x$ appears in some entry of all its factors, say $c_i^k=c_{i,1}\cdots c_{i,r}$. Since 
\[
c_{m+1}^k=[c_i,c_j]^k=[c_i^k,c_j]\sigma_{m+2}^{-1},
\]
the result follows by applying several times Lemma \ref{prop_comm}(3) to $[c_i^k,c_j]=[c_{i,1}\cdots c_{i,r},c_j]$.

(4) We proceed by induction on the number $r$ of factors of $\sigma_m$. The basic step $r=1$ follows by the previous item. Assume that the result holds for all positive integers up to $r\geq 1$ and set $\sigma_m=\tau_1\cdots\tau_r\tau_{r+1}$ where each $\tau_i$ is a commutator of weight at least $m$ with $x$ in some entry. By Lemma \ref{prop_comm}(5) we can deduce that 
$$\sigma_m^k=(\tau_1\cdots\tau_r\tau_{r+1})^k=(\tau_1\cdots\tau_r)^k\tau_{r+1}^k\sigma_{m+1}$$
where $\sigma_{m+1}$ is a product of commutators of weight at least $m+1$ with $x$ appearing in some entry in each factor. Thus, the result follows applying the induction hypothesis to $(\tau_1\cdots\tau_r)^k$ and to $\tau_{r+1}^k$. 

(5)  We show item (5) by induction on $m$. Firstly, we will show the basic step $m=1$. By item (1), we can write 
$x^{g_1}x^{g_2}\cdots x^{g_k}=x^k\sigma_2$. 
By parts (1) and (5) of Lemma \ref{prop_comm}, we have  
$$(x^{g_1}x^{g_2}\cdots x^{g_k})^k=(x^k\sigma_2)^k=x^{k^2}\tilde{\sigma_2}$$
where $\tilde{\sigma_2}$ is a product of commutators of weight at least 2 and the elements $x$ appears in some entry of all factors.

 Now, assume that the result holds for  all positive integers up to $m\geq 1$. By the induction hypothesis, we  get  
$$(x^{g_1}x^{g_2}\cdots x^{g_k})^{k^{m+1}}=((x^{g_1}x^{g_2}\cdots x^{g_k})^{k^{m}})^k=(x^{k^{m+1}}\sigma_{m+1})^k$$ 
where $\sigma_{m+1}$ is a product of commutators of weight at least $m+1$ and the element $x$ appears in some entry of all factors. By Lemma \ref{prop_comm}(5), we can deduce that $(x^{k^{m+1}}\sigma_{m+1})^k=x^{k^{m+2}}\sigma_{m+1}^k\sigma_{m+2}$, where $\sigma_{m+2}$ is a product of commutators of weight at least $m+2$ and the element $x$ appears in some entry of all the factors. Thus, the result follows applying item (4) to $\sigma_{m+1}^k$.
\end{proof}

We are now ready to prove Theorem \ref{thm:nilpotent}.

\begin{proof}
(1) Let $x$ be an element in a group $G$ with generalized order $o_{\bullet}(x)=k$. Thus, there exist elements $g_1,\ldots,g_k$ in $G$ such that $x^{g_1}x^{g_2}\cdots x^{g_k}=1$. By Lemma \ref{lem:commutators-powers}~(5), we have, for every positive integer $m$, 
$$1=(x^{g_1}x^{g_2}\cdots x^{g_k})^{k^{m-1}}=x^{k^m}\sigma_m$$
 where $\sigma_{m}$ is a product of commutators of weight at least $m$. Thus, $x^{k^m}\in \gamma_m(G)$.

(2) Let $c$ be the nilpotency class of $G$. We need to show that $\Tg(G) \subseteq \T (G)$. Choose arbitrarily $x \in \Tg(G)$. By the previous item $$x^{k^{c+1}} \in \gamma_{c+1}(G)=\{1\}.$$ As $x \in \Tg(G)$ has been chosen arbitrarily, we conclude that $\Tg(G) \subseteq \T (G)$. 
\end{proof}

A group $G$ is said to be orderable, if there is a total order on $G$ such that $a\leq b$ implies that $xay\leq xby$ for all $a,b,x,y\in G$. 
It is known that torsion-free nilpotent groups are orderable; see~\cite{Bludov}.

\begin{rem}
If $G$ is an orderable group then $\Tg(G)=\{1\}$. It is known that the converse does not hold in general (see \cite{Bludov}). We can deduce from previous result that if $G$ is nilpotent with $\Tg(G)=\{1\}$, then $G$ is orderable.  
\end{rem}

\begin{cor}\label{cor:pdivides-gen-order}
If $x$ is an element in a nilpotent $p$-group $G$, then  $p$ divides $o_{\bullet}(x)$.  
\end{cor}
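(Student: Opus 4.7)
The plan is to derive the corollary as a direct consequence of Theorem~\ref{thm:nilpotent}(1) together with the fact that in a $p$-group every element has order a power of $p$. Let $c$ denote the nilpotency class of $G$, assume $x\neq 1$ (for $x=1$ we have $o_{\bullet}(x)=1$ and the statement is vacuous, so the corollary should be read for nontrivial $x$), and set $k=o_{\bullet}(x)$.

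First, I would apply Theorem~\ref{thm:nilpotent}(1) with the choice $m=c+1$. This gives
\[
x^{k^{c+1}}\in\gamma_{c+1}(G)=\{1\},
\]
so in particular $x$ is a torsion element with $o(x)$ dividing $k^{c+1}$.

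Next, since $G$ is a $p$-group, every element has order a power of $p$; combined with $x\neq 1$, this yields $o(x)=p^{a}$ for some integer $a\geq 1$. From $p^{a}\mid k^{c+1}$ it follows that $p\mid k^{c+1}$, and because $p$ is prime we conclude $p\mid k=o_{\bullet}(x)$, as required.

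There is essentially no obstacle: the whole argument is a one-line deduction once Theorem~\ref{thm:nilpotent}(1) is in hand, and the only subtle point is the implicit restriction to nontrivial $x$, which is standard when speaking about prime divisors of an order.
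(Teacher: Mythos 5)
Your proof is correct and follows essentially the same route as the paper: apply Theorem~\ref{thm:nilpotent}(1) with $m=c+1$ to get $x^{k^{c+1}}=1$, then use that element orders in a $p$-group are $p$-powers to conclude $p\mid k$. Your remark about excluding $x=1$ is a reasonable (and slightly more careful) reading of the statement than the paper itself offers.
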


\begin{proof}
Let $c$ be the nilpotency class of $G$. Since $G$ is a $p$-group we get that $x$ is a generalized torsion element. Setting $o_{\bullet}(x)=k$, it follows from Theorem \ref{thm:nilpotent}(1) that $x^{k^{c+1}}=1$. Consequently, $p$ divides $k^{c+1}$ and so, $p$ divides $k$.   
\end{proof}

\begin{rem}
The previous result cannot be improved. In general, if $G$ is a $p$-group and $g\in G$, then $o_{\bullet}(g)$ need not be a $p$-power. Let $G$ be the $8$-th group of order $81$ from the GAP Small Groups Library. Then $G$ contains elements with generalized torsion order $6$. 
\end{rem}

\section*{Acknowledgements}
This work was partially supported by DPI/UnB and FAPDF (Brazil). 
The second author acknowledges the financial support of  the CNPq projects 
\textit{Produtividade em Pesquisa} (project no.: 308212/2019-3)  
and \textit{Universal} (project no.: 421624/2018-3 and 402934/2021-0) and the Fapemig Project \emph{Universal} (project no.: APQ-00971-22). 
The third author was partially supported by FAPEMIG  RED-00133-21.


\begin{thebibliography}{GAP22}

\bibitem[ASH85]{ASH}
Z.~Arad, J.~Stavi, and M.~Herzog.
\newblock Powers and products of conjugacy classes in groups.
\newblock In {\em Products of conjugacy classes in groups}, volume 1112 of {\em
  Lecture Notes in Math.}, pages 6--51. Springer, Berlin, 1985.

\bibitem[Blu72]{Bludov}
V.~V. Bludov.
\newblock An example of an unorderable group with strictly isolated identity.
\newblock {\em Algebra i Logika}, 11:619--632, 736, 1972.

\bibitem[CS99]{CS}
J.~H. Conway and N.~J.~A. Sloane.
\newblock {\em Sphere packings, lattices and groups}, volume 290 of {\em
  Grundlehren der mathematischen Wissenschaften}.
\newblock Springer-Verlag, New York, third edition, 1999.

\bibitem[GAP22]{GAP4}
The GAP~Group.
\newblock {\em {GAP -- Groups, Algorithms, and Programming, Version 4.12.2}}, gap-system.org, 
  2022.

\bibitem[Gor67]{Gorc}
Ju.~M. Gor\v{c}akov.
\newblock An example of a {$G$}-periodic torsion-free group.
\newblock {\em Algebra i Logika Sem.}, 6(3):5--7, 1967.

\bibitem[Gor73]{Gor}
A.~P. Gorju\v{s}kin.
\newblock An example of a finitely generated {$G$}-periodic torsion-free group.
\newblock {\em Sibirsk. Mat. \v{Z}.}, 14:204--207, 239, 1973.

\bibitem[Hup67]{HuppertI}
B.~Huppert.
\newblock {\em Endliche {G}ruppen {I}},
\newblock volume 134 of {\em Grundlehren der mathematischen Wissenschaften}, 
  Springer-Verlag, Berlin-New York, 1967.

\bibitem[IMT21]{IMM}
Tetsuya Ito, Kimihiko Motegi, and Masakazu Teragaito.
\newblock Generalized torsion and {D}ehn filling.
\newblock {\em Topology Appl.}, 301:Paper No. 107515, 14, 2021.

\bibitem[IMT23]{IMM23}
Tetsuya Ito, Kimihiko Motegi, and Masakazu Teragaito.
\newblock Generalized torsion for hyperbolic 3-manifold groups with arbitrary
  large rank.
\newblock {\em Bull. London Math. Soc.} (to appear), arxiv.org/abs/2112.00418, 2021.

\bibitem[KM22]{KM}
E.~I. Khukhro and V.~D. Mazurov.
\newblock Unsolved problems in group theory. {T}he {K}ourovka notebook, arxiv.org/1401.0300, revision 26, 2022.

\bibitem[LP97]{LP}
Martin~W. Liebeck and L\'{a}szl\'{o} Pyber.
\newblock Upper bounds for the number of conjugacy classes of a finite group.
\newblock {\em J. Algebra}, 198(2):538--562, 1997.

\bibitem[MT17]{MT}
Kimihiko Motegi and Masakazu Teragaito.
\newblock Generalized torsion elements and bi-orderability of 3-manifold
  groups.
\newblock {\em Canad. Math. Bull.}, 60(4):830--844, 2017.

\bibitem[NR16]{NR}
Geoff Naylor and Dale Rolfsen.
\newblock Generalized torsion in knot groups.
\newblock {\em Canad. Math. Bull.}, 59(1):182--189, 2016.

\bibitem[Osi10]{Osin}
Denis Osin.
\newblock Small cancellations over relatively hyperbolic groups and embedding
  theorems.
\newblock {\em Ann. of Math. (2)}, 172(1):1--39, 2010.

\bibitem[Rob96]{Rob}
Derek J.~S. Robinson.
\newblock {\em A course in the theory of groups}, volume~80 of {\em Graduate
  Texts in Mathematics}.
\newblock Springer-Verlag, New York, second edition, 1996.

\bibitem[Sha09]{Sha}
Aner Shalev.
\newblock Word maps, conjugacy classes, and a noncommutative {W}aring-type
  theorem.
\newblock {\em Ann. of Math. (2)}, 170(3):1383--1416, 2009.

\bibitem[VG10]{VG} E. P. Vdovin and A. A. Gal't. Strong reality of finite simple groups. 
\newblock {\em Sib. Mat. J.} 51(4):610--615, 2010.

\end{thebibliography}
\end{document}